\begin{document}

\newtheorem{thm}{Theorem}[section]
\newtheorem{theorem}{Theorem}[section]
\newtheorem{lem}[thm]{Lemma}
\newtheorem{lemma}[thm]{Lemma}
\newtheorem{prop}[thm]{Proposition}
\newtheorem{proposition}[thm]{Proposition}
\newtheorem{cor}[thm]{Corollary}
\newtheorem{defn}[thm]{Definition}
\newtheorem{remark}[thm]{Remark}
\newtheorem{conj}[thm]{Conjecture}

\numberwithin{equation}{section}

\newcommand{\Z}{{\mathbb Z}} %cph changed from \mathbf
\newcommand{\Q}{{\mathbb Q}}
\newcommand{\R}{{\mathbb R}}
\newcommand{\C}{{\mathbb C}}
\newcommand{\N}{{\mathbb N}}
\newcommand{\FF}{{\mathbb F}}
\newcommand{\fq}{\mathbb{F}_q}
\newcommand{\rmk}[1]{\footnote{{\bf Comment:} #1}}

\newcommand{\bfA}{{\boldsymbol{A}}}
\newcommand{\bfY}{{\boldsymbol{Y}}}
\newcommand{\bfX}{{\boldsymbol{X}}}
\newcommand{\bfZ}{{\boldsymbol{Z}}}
\newcommand{\bfa}{{\boldsymbol{a}}}
\newcommand{\bfy}{{\boldsymbol{y}}}
\newcommand{\bfx}{{\boldsymbol{x}}}
\newcommand{\bfz}{{\boldsymbol{z}}}
\newcommand{\F}{\mathcal{F}}
\newcommand{\Gal}{\mathrm{Gal}}
\newcommand{\Fr}{\mathrm{Fr}}
\newcommand{\Hom}{\mathrm{Hom}}
\newcommand{\GL}{\mathrm{GL}}

\renewcommand{\mod}{\;\operatorname{mod}}
\newcommand{\ord}{\operatorname{ord}}
\newcommand{\TT}{\mathbb{T}}
\renewcommand{\i}{{\mathrm{i}}}
\renewcommand{\d}{{\mathrm{d}}}
\renewcommand{\^}{\widehat}
\newcommand{\HH}{\mathbb H}
\newcommand{\Vol}{\operatorname{vol}}
\newcommand{\area}{\operatorname{area}}
\newcommand{\tr}{\operatorname{tr}}
\newcommand{\norm}{\mathcal N} % norm =(\frac{ n+\sqrt{n^2-4}} 2)^2
\newcommand{\intinf}{\int_{-\infty}^\infty}
\newcommand{\ave}[1]{\left\langle#1\right\rangle} %  average
\newcommand{\Var}{\operatorname{Var}}
\newcommand{\Prob}{\operatorname{Prob}}
\newcommand{\sym}{\operatorname{Sym}}
\newcommand{\disc}{\operatorname{disc}}
\newcommand{\CA}{{\mathcal C}_A}
\newcommand{\cond}{\operatorname{cond}} % conductor
\newcommand{\lcm}{\operatorname{lcm}}
\newcommand{\Kl}{\operatorname{Kl}} %Kloosterman sum
\newcommand{\leg}[2]{\left( \frac{#1}{#2} \right)}  % Legendre symbol
\newcommand{\Li}{\operatorname{Li}}

\newcommand{\sumstar}{\sideset \and^{*} \to \sum}

\newcommand{\LL}{\mathcal L} %L-function of u
\newcommand{\sumf}{\sum^\flat}
\newcommand{\Hgev}{\mathcal H_{2g+2,q}}
\newcommand{\USp}{\operatorname{USp}}
\newcommand{\conv}{*}
\newcommand{\dist} {\operatorname{dist}}
\newcommand{\CF}{c_0} % Fejer constant
\newcommand{\kerp}{\mathcal K}

\newcommand{\Cov}{\operatorname{cov}}
\newcommand{\Sym}{\operatorname{Sym}}

\newcommand{\ES}{\mathcal S} % sums over AP
\newcommand{\EN}{\mathcal N} % sum over short intervals
\newcommand{\EM}{\mathcal M} % sum over pols of deg n
\newcommand{\Sc}{\operatorname{Sc}} %Secular coefficients
\newcommand{\Ht}{\operatorname{Ht}}

\newcommand{\E}{\operatorname{E}} % expectation
\newcommand{\sign}{\operatorname{sign}} %sign

\newcommand{\divid}{d} % the divisor function

\title[Mean Value of $\left|K_{2}(\mathcal{O})\right|$ in Function Fields]{A Simple Proof of the Mean Value of $\left|K_{2}(\mathcal{O})\right|$ in Function Fields \newline \newline \textit{Une démonstration simple de la valeur moyenne de $\left|K_{2}(\mathcal{O})\right|$ en corps de fonctions}} 	
\author{Julio Andrade}
\address{\newline
Mathematical Institute - University of Oxford \newline 
Radcliffe Observatory Quarter \newline
Woodstock Road \newline
Oxford \newline
OX2 6GG \newline
UK} %\newline  
%\vspace{0.03in} \newline
%And \newline
%\vspace{0.03in} \newline
%Depto. Matematica, PUC-Rio \newline
%Rio De Janeiro – RJ \newline
%Brazil}
\email{buenodeandra@maths.ox.ac.uk}

%\thanks{The author is supported by a William Hodge Fellowship (EPSRC) and an IH\'{E}S Postdoctoral Research Fellowship.} 

\subjclass[2010]{Primary 11M38; Secondary 11G20, 11R58, 13F30, 14G10}
\keywords{algebraic $K$ groups \and character sums \and finite fields \and function fields \and quadratic Dirichlet $L$-functions \and Riemann hypothesis for curves \and square-free polynomials}

%%%%%%%%%%%%%%%%%%%%%%%%%%%%%%%%%%%%%%%%%%%%%%%%%%%%
\begin{abstract}
Let $F$ be a finite field of odd cardinality $q$, $A=F[T]$ the polynomial ring over $F$, $k=F(T)$ the rational function field over $F$ and $\mathcal{H}$ the set of square-free monic polynomials in $A$ of degree odd. If $D\in\mathcal{H}$, we denote by $\mathcal{O}_{D}$ the integral closure of $A$ in $k(\sqrt{D})$. In this note we give a simple proof for the average value of the size of the groups $K_{2}(\mathcal{O}_{D})$ as $D$ varies over the ensemble $\mathcal{H}$ and $q$ is kept fixed. The proof is based on character sums estimates and in the use of the Riemann hypothesis for curves over finite fields.
\newline
\newline
\textbf{Résumé:} \textit{Soit $F$ un corps fini de cardinalité impair $q$, $A=F[T]$ l'anneau de polynomes sur de $F$, $k=F(T)$ Le corps de fonction rationnelle sur $F$ et $\mathcal{H}$ l'ensemble des polynomes unitaires et sans facteur carré en $A$ de degré impair. Si $D\in\mathcal{H}$, on dénoter $\mathcal{O}_{D}$ la clóture intégrale de $A$ en $k(\sqrt{D})$. Dans cette note, nous donnons une preuve simple pour la valeur moyenne de la taille des groupes $K_{2}(\mathcal{O}_{D})$ quand $D$ varie dans l'ensemble $\mathcal{H}$ et $q$ est maintenu fixe. La preuve est basée sur des sommes de caractères et dans l'utilisation de l'hypothèse de Riemann pour les courbes sur les corps finis.}
\end{abstract}
\date{\today}
%%%%%%%%%%%%%%%%%%%%%%%%%%%%%%%%%%%%%%%%%%%%%%%%%%%%

\maketitle

%%%%%%%%%%%%%%%%%%%%%%%%%%%%%%%%%%%%%%%%%%%%%%%%%%%%
\section{Introduction}
In \cite{Ros}, the author established average value results for the size of the algebraic $K$ groups $K_{2}(\mathcal{O})$ over function fields. His proof crucially depends on the mean value of quadratic Dirichlet $L$-functions over function fields which in turn was first obtained with the help of functions defined on the metaplectic two-fold cover of $GL(2,k_{\infty})$, where $k_{\infty}$ is the completion of $k=\mathbb{F}_{q}(T)$ at the prime at infinity.

In this paper we provide a simple proof for the mean value of the size of the groups $K_{2}(\mathcal{O})$ over the rational function field $\mathbb{F}_{q}(T)$. Our proof is simpler in the sense that we avoid the Eisenstein series construction involved in the proof given by Hoffstein and Rosen \cite{HR}, and we do this by computing the mean value of the required quadratic Dirichlet $L$-function in $\mathbb{F}_{q}(T)$ through character sum estimates. 
%by estimating character sums.

We start by setting the notation. Let $F=\mathbb{F}_{q}$ be a finite field with $q$ elements ($q$ odd), $A=\mathbb{F}_{q}[T]$, and $k=\mathbb{F}_{q}(T)$. For $f\in A$ we define $|f|=q^{\text{deg}(f)}$ if $f\neq0$ and $|0|=0$. If $D\in A$ is square-free then $\mathcal{O}_{D}$ is the integral closure of $A$ in the quadratic function field $K_{D}=k(\sqrt{D})$. 

The zeta function associated to $A$ is defined by

\begin{equation}
\zeta_{A}(s)=\sum_{\substack{f\in A \\ \text{monic}}}|f|^{-s}=\prod_{\substack{P\in A \\ \text{monic} \\ \text{irreducible}}}(1-|P|^{-s})^{-1}.
\end{equation}    
A straightforward calculation shows that $\zeta_{A}(s)=(1-q^{1-s})^{-1}$. If $D\in A$ is square-free we set $\chi_{D}(f)=(D/f)$ where $(D/f)$ is the Kronecker symbol in $A$, and we form the quadratic Dirichlet $L$-function $L(s,\chi_{D})=\sum_{f}\chi_{D}(f)|f|^{-s}$. Lastly, the zeta function of the ring $\mathcal{O}_{D}$ is defined by $\zeta_{\mathcal{O}_{D}}(s)=\sum_{\mathfrak{a}}N\mathfrak{a}^{-s}$ where $\mathfrak{a}$ runs through the nonzero ideals of $\mathcal{O}_{D}=A[\sqrt{D}]$ and $N\mathfrak{a}$ denotes the norm of $\mathfrak{a}$, i.e., the number of elements in $\mathcal{O}_{D}/\mathfrak{a}$. Similar to number fields \cite[Proposition 17.7]{Ros-book}, one has the relation

\begin{equation}
\label{eq:zetaOD}
\zeta_{\mathcal{O}_{D}}(s)=\zeta_{A}(s)L(s,\chi_{D}).
\end{equation}

Making use of \eqref{eq:zetaOD}, together with the results of Quillen \cite{Qui} and Tate \cite{Tate}, Rosen \cite{Ros} was able to relate the number $L(2,\chi_{D})$ to the size of the group $K_{2}(\mathcal{O}_{D})$. We will use such relationship to prove our main result. 

\section{The algebraic $K$ groups $K_{2}(\mathcal{O}_{D})$ and a theorem of Rosen}

Let $D\in A$ be a monic and square-free polynomial. For ease of discussion we only consider the case where degree of $D$ is odd since the case with degree of $D$ even is similar and there are no important differences. Let $F=\mathbb{F}_{q}$ and $K/F$ be a function field in one variable with a finite constant field $\mathbb{F}_{q}$. The primes in $K$ are denoted by $v$, and $\mathcal{O}_{v}$ is the valuation ring at $v$. We denote by $\mathcal{P}_{v}$ the maximal ideal of $\mathcal{O}_{v}$ and by $\overline{F}_{v}$ the residue class field at $v$. The tame symbol $(\ast,\ast)_{v}$ is a mapping from $K^{*}\times K^{*}$ to $\overline{F}_{v}^{*}$ defined by 

\begin{equation}
(a,b)_{v}=(-1)^{v(a)v(b)}a^{v(b)}/b^{v(a)}\ \text{modulo}\ \mathcal{P}_{v}.
\end{equation} 

Let $a\in K^{*}$ such that $a\neq0,1$ so the group $K_{2}(K)$ is defined to be $K^{*}\otimes K^{*}$ modulo the subgroup generated by the elements $a\otimes(1-a)$. Moore (see \cite{Tate} for more details) proved that the following sequence is exact

\begin{equation}
(0)\longrightarrow\ker(\lambda)\longrightarrow K_{2}(K)\overset{\lambda}{\longrightarrow}\bigoplus_{v}\overline{F}_{v}^{*}\overset{\mu}{\longrightarrow}F^{*}\longrightarrow(0),
\end{equation}
where $\lambda:K_{2}(K)\rightarrow\bigoplus_{v}\overline{F}_{v}^{*}$ is the sum of the tame symbol maps, and $\mu:\bigoplus_{v}\overline{F}_{v}^{*}\rightarrow F^{*}$ is the map given by $\mu(\ldots,a_{v},\ldots)=\prod_{v}a_{v}^{m_{v}/m}$ where $m_{v}=N\mathcal{P}_{v}-1$ and $m=\left|F^{*}\right|=q-1$.

By making use of the above discussion with Tate's proof \cite{Tate} of the Birch-Tate conjecture concerning the size of $\ker(\lambda)$, i.e.,

\begin{equation}
|\ker(\lambda)|=(q-1)(q^{2}-1)\zeta_{K}(-1),
\end{equation}
where $\zeta_{K}(s)=\prod_{v}(1-N\mathcal{P}_{v}^{-s})^{-1}$, the product being over all the primes $v$ of the function field $K$, Rosen \cite[Proposition 2]{Ros} established that

\begin{equation}
\label{eq:numberK2}
\#K_{2}(\mathcal{O}_{D})=q^{(3/2)\text{deg}(D)}q^{-3/2}L(2,\chi_{D}).
\end{equation}

With this in hand Rosen \cite[Proposition 2(a)]{Ros} proves the following

\begin{theorem}[Rosen]
\label{thm:Rosen}
Let $m$ be a square-free polynomial of degree $M$, with $M$ odd, and $\varepsilon>0$ given. Then

\begin{equation}
\label{eq:Rosen}
(q-1)^{-1}(q^{M}-q^{M-1})^{-1}\sum_{\substack{m\in A \\ m \ \mathrm{square-free}}}|K_{2}(\mathcal{O}_{m})|=\zeta_{A}(2)\zeta_{A}(4)c(2)q^{-3/2}q^{3(M/2)}+O(q^{M(1+\varepsilon)}),
\end{equation} 
where
\begin{equation}
c(2)=\prod_{P}\left(1-|P|^{-2}-|P|^{-5}+|P|^{-6}\right),
\end{equation}
the product is taken over all monic irreducible polynomials in $A$.
\end{theorem}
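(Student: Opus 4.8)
The plan is to reduce the average in \eqref{eq:Rosen} to a mean value of the quadratic Dirichlet $L$-functions $L(2,\chi_{m})$, to expand each $L(2,\chi_{m})$ as a short Dirichlet polynomial, and then to interchange the order of summation, so that the main term arises from the perfect squares and the remaining off-diagonal terms are controlled by character sum estimates (Weil's bound, i.e.\ the Riemann Hypothesis for curves over finite fields).

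First, by \eqref{eq:numberK2} one has $\#K_{2}(\mathcal{O}_{m})=q^{-3/2}q^{3M/2}L(2,\chi_{m})$ for every square-free $m$ of degree $M$, so $\sum_{m}\#K_{2}(\mathcal{O}_{m})=q^{-3/2}q^{3M/2}\sum_{m}L(2,\chi_{m})$, and it suffices to prove
\[
\sum_{m}L(2,\chi_{m})=(q-1)(q^{M}-q^{M-1})\,\zeta_{A}(2)\zeta_{A}(4)\,c(2)+O\bigl(q^{M/2}\bigr),
\]
the sum running over all square-free $m\in A$ of degree $M$; dividing by $(q-1)(q^{M}-q^{M-1})$ and multiplying by $q^{-3/2}q^{3M/2}$ then yields \eqref{eq:Rosen}, with an error even smaller than the stated $O(q^{M(1+\varepsilon)})$. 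It is classical (via \eqref{eq:zetaOD}) that $L(s,\chi_{m})$ is a polynomial of degree at most $M-1$ in $q^{-s}$, so $L(2,\chi_{m})$ equals exactly the truncated sum $\sum_{f}\chi_{m}(f)|f|^{-2}$ over monic $f$ with $\deg f\leq M-1$. Writing $m=cD$ with $c\in\mathbb{F}_{q}^{*}$ and $D$ monic square-free of degree $M$, and using the standard evaluation $(c/f)=\chi(c)^{\deg f}$ for $c\in\mathbb{F}_{q}^{*}$ ($\chi$ the quadratic character of $\mathbb{F}_{q}^{*}$), so that $\sum_{c\in\mathbb{F}_{q}^{*}}(c/f)$ is $q-1$ or $0$ according as $\deg f$ is even or odd, we obtain after interchanging the two sums
\[
\sum_{m}L(2,\chi_{m})=(q-1)\sum_{\substack{f\ \text{monic},\ \deg f\leq M-1\\ \deg f\ \text{even}}}|f|^{-2}\sum_{\substack{D\ \text{monic square-free}\\ \deg D=M}}\chi_{D}(f).
\]

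The main term is the contribution of the perfect squares $f=h^{2}$ (automatically of even degree): since $\chi_{D}(h^{2})=1$ when $\gcd(D,h)=1$ and $0$ otherwise, it equals $(q-1)\sum_{\deg h\leq (M-1)/2}|h|^{-4}N(M,h)$, where $N(M,h)$ is the number of monic square-free $D$ of degree $M$ coprime to $h$. A standard square-free count (remove the square-free condition by $\mu$ and reduce to complete residue counts) gives $N(M,h)=(q^{M}-q^{M-1})\prod_{P\mid h}(1+|P|^{-1})^{-1}+O\bigl(2^{\omega(h)}q^{(M+\deg h)/2}\bigr)$; the error term, weighted by $|h|^{-4}$, sums to $O(q^{M/2})$, and extending the $h$-sum to all monic $h$ costs only $O(q^{-M/2})$. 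The resulting Euler product collapses:
\[
\sum_{h\ \text{monic}}|h|^{-4}\prod_{P\mid h}\bigl(1+|P|^{-1}\bigr)^{-1}=\prod_{P}\left(1+\frac{|P|^{-4}}{(1+|P|^{-1})(1-|P|^{-4})}\right)=\prod_{P}\frac{1+|P|^{-1}-|P|^{-5}}{(1+|P|^{-1})(1-|P|^{-4})},
\]
and since $1-|P|^{-2}-|P|^{-5}+|P|^{-6}=(1-|P|^{-1})(1+|P|^{-1}-|P|^{-5})$, the last product equals $\zeta_{A}(2)\zeta_{A}(4)c(2)$. Hence the diagonal contributes $(q-1)(q^{M}-q^{M-1})\zeta_{A}(2)\zeta_{A}(4)c(2)+O(q^{M/2})$, which is the asserted main term.

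It remains to bound the off-diagonal, which is at most $(q-1)\sum_{f\ \text{not a square},\ \deg f\leq M-1}|f|^{-2}\bigl|\sum_{D}\chi_{D}(f)\bigr|$. For fixed $f$ that is not a square, $D\mapsto\chi_{D}(f)=(D/f)$ is a non-principal character $\psi_{f}$ modulo $\mathrm{rad}(f)$. Removing the square-free condition on $D$ by $\mathbf{1}[D\ \text{square-free}]=\sum_{l^{2}\mid D}\mu(l)$ reduces $\sum_{D}\chi_{D}(f)$ to a sum over $l$ with $\gcd(l,f)=1$ of the complete character sums $\sum_{\deg e=N}\psi_{f}(e)$, with $N=M-2\deg l$. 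By orthogonality these vanish once $N\geq\deg\mathrm{rad}(f)$, and for the remaining ranges the Riemann Hypothesis for curves over finite fields (Weil's bound applied to the $L$-function of $\psi_{f}$) gives $\bigl|\sum_{\deg e=N}\psi_{f}(e)\bigr|\leq 2^{\deg f}q^{N/2}$; summing the geometric series in $\deg l$ and then over $f$ against $|f|^{-2}$ (which converges, the factor $2^{\deg f}$ being dominated by $q^{-2\deg f}$) yields $O(q^{M/2})$ for the off-diagonal. Combining with the diagonal gives $\sum_{m}L(2,\chi_{m})=(q-1)(q^{M}-q^{M-1})\zeta_{A}(2)\zeta_{A}(4)c(2)+O(q^{M/2})$, and the theorem follows. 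The only steps needing genuine care are the uniform control of the off-diagonal over all non-square $f$ of degree up to $M-1$ — in particular checking that $\psi_{f}$ is really non-principal and keeping track of its conductor, so that the square-root cancellation survives the number of terms — and the algebraic verification that the diagonal Euler product is precisely $\zeta_{A}(2)\zeta_{A}(4)c(2)$.
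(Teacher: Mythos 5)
Your proof is correct, but note that the paper itself does not prove this statement: Theorem \ref{thm:Rosen} is quoted from Rosen \cite{Ros}, whose argument rests on the Hoffstein--Rosen mean value of $L(2,\chi)$ obtained from Eisenstein series on the metaplectic double cover of $GL(2,k_{\infty})$. What you have written is instead a direct character-sum proof, and it is essentially the method the paper reserves for its own Theorem \ref{mainthmnote} (expand $L(2,\chi_m)$ as the finite sum $\sum_{\deg f\le M-1}\chi_m(f)|f|^{-2}$, let the squares $f=h^2$ produce the main term via a count of square-free $D$ coprime to $h$, and kill the non-square $f$ with the Weil bound), extended from monic $D$ to all square-free $m$ by writing $m=cD$ and using $(c/f)=\chi(c)^{\deg f}$ to reduce to even-degree $f$. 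This buys two things the paper does not give: it makes Rosen's more general (non-monic) average accessible by the elementary method, so the appeal to \cite{HR} becomes unnecessary even for Theorem \ref{thm:Rosen}; and your Euler-product identity $\zeta_A(2)c(2)=P(4)$, via $1-x^{2}-x^{5}+x^{6}=(1-x)(1+x-x^{5})$ with $x=|P|^{-1}$, shows that the leading constants in \eqref{eq:Rosen} and \eqref{eq:mainthm} in fact coincide --- sharper than the paper's remark that they are merely ``close but not equal.'' Two small points of care: the character $D\mapsto(D/f)$ is non-principal modulo the product of the primes dividing $f$ to an \emph{odd} multiplicity (not quite $\mathrm{rad}(f)$), which is all you need; and your off-diagonal bound picks up a harmless factor of order $\deg f$ from the sum over $l$ (that sum is constant in $\deg l$, not geometric), giving $O(Mq^{M/2})$ rather than $O(q^{M/2})$ --- still far inside the stated $O(q^{M(1+\varepsilon)})$ after renormalization.
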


\section{The Main Result}

Without further postponements we present below the main result of this note.

\begin{theorem}
\label{mainthmnote}
Let $\mathcal{H}=\left\{\text{$D\in A$ {\rm monic, square-free and deg$(D)=2g+1$}}\right\}$ and $\varepsilon>0$. Then

\begin{equation}
\label{eq:mainthm}
\frac{1}{\#\mathcal{H}}\sum_{D\in\mathcal{H}}\#K_{2}(\mathcal{O}_{D})=q^{\tfrac{3}{2}(2g+1)}q^{-3/2}\zeta_{A}(4)P(4)+O(q^{(2g+1)(1+\varepsilon)}),
\end{equation}
where

\begin{equation}
P(s)=\prod_{\substack{P\in A \\ {\rm monic} \\ {\rm irreducible}}}\left(1-\frac{1}{(|P|+1)|P|^{s}}\right).
\end{equation}
\end{theorem}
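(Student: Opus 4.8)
The plan is to reduce the statement to a first-moment computation for $L(2,\chi_D)$ averaged over the family $\mathcal{H}$, and then to evaluate that average by expanding the $L$-value into a character sum and exchanging the order of summation. By equation~\eqref{eq:numberK2} we have $\#K_2(\mathcal{O}_D)=q^{(3/2)\deg(D)}q^{-3/2}L(2,\chi_D)$, and since $\deg(D)=2g+1$ is constant over $\mathcal{H}$, the prefactor pulls out of the average. So it suffices to show
\begin{equation}
\label{eq:reduction}
\frac{1}{\#\mathcal{H}}\sum_{D\in\mathcal{H}}L(2,\chi_D)=\zeta_A(4)P(4)+O(q^{-(2g+1)(1/2-\varepsilon)}),
\end{equation}
after which multiplying through by $q^{(3/2)(2g+1)}q^{-3/2}$ gives~\eqref{eq:mainthm}. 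Note that since $\deg(D)=2g+1$ is odd, the $L$-function $L(s,\chi_D)$ is a polynomial in $q^{-s}$ of degree $2g$ (the completed $L$-function has no trivial factor in the odd case), so $L(2,\chi_D)=\sum_{f\text{ monic},\ \deg f\le 2g}\chi_D(f)|f|^{-2}$ is a genuinely finite sum.

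First I would write $\#\mathcal{H}$ explicitly: the number of monic square-free polynomials of degree $n\ge 2$ is $q^n-q^{n-1}$, so $\#\mathcal{H}=q^{2g+1}-q^{2g}$. Next, I would substitute the truncated Dirichlet series for $L(2,\chi_D)$ and interchange the sums, obtaining
\begin{equation}
\sum_{D\in\mathcal{H}}L(2,\chi_D)=\sum_{\substack{f\ \mathrm{monic}\\ \deg f\le 2g}}\frac{1}{|f|^{2}}\sum_{D\in\mathcal{H}}\chi_D(f).
\end{equation}
The inner sum $\sum_{D\in\mathcal{H}}\chi_D(f)$ is a character sum over square-free polynomials. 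Using quadratic reciprocity in $\fq[T]$ to flip $\chi_D(f)=(D/f)$ into $(f/D)$ (up to sign factors depending on $\deg f$ and leading coefficients, which are harmless here since $D$ is monic and we are in the odd case), and then detecting squarefreeness of $D$ via the standard Möbius-type identity $\sum_{\ell^2\mid D}\mu(\ell)$, one reduces the inner sum to a main term plus an error. The main term comes from $f$ a perfect square, $f=h^2$: for such $f$ the character $\chi_D(f)$ is essentially the indicator that $(h,D)=1$, and summing over $D\in\mathcal{H}$ produces, after a sieve, the Euler product. The off-square terms $f$ contribute the error term, and this is where the Riemann Hypothesis for curves over finite fields enters: for $f$ not a perfect square, $\sum_{D}\chi_D(f)$ is bounded by the Weil bound on the associated character sum, giving a saving of size roughly $q^{(2g+1)/2}$ relative to the trivial bound $q^{2g+1}$.

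Assembling the pieces, the square terms $f=h^2$ with $\deg h\le g$ contribute
\begin{equation}
\sum_{\substack{h\ \mathrm{monic}\\ \deg h\le g}}\frac{1}{|h|^{4}}\cdot\#\{D\in\mathcal{H}:(D,h)=1\},
\end{equation}
and the local density $\#\{D\in\mathcal{H}:(D,h)=1\}/\#\mathcal{H}$ is, up to lower-order terms, $\prod_{P\mid h}\frac{|P|}{|P|+1}$. Completing the sum over all monic $h$ (the tail $\deg h>g$ being $O(q^{-4g})$, absorbed in the error) and interchanging with the Euler product over $P$ yields
\begin{equation}
\sum_{h\ \mathrm{monic}}\frac{1}{|h|^{4}}\prod_{P\mid h}\frac{|P|}{|P|+1}=\prod_{P}\left(1+\frac{|P|}{|P|+1}\cdot\frac{|P|^{-4}}{1-|P|^{-4}}\right),
\end{equation}
and a direct manipulation of the local factor shows this equals $\zeta_A(4)P(4)$ with $P$ as in the statement, since $\left(1-\frac{1}{(|P|+1)|P|^{4}}\right)(1-|P|^{-4})^{-1}=1+\frac{|P|^{-4}}{1-|P|^{-4}}\cdot\frac{|P|}{|P|+1}$. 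The main obstacle will be bookkeeping the error terms cleanly: one must control simultaneously the contribution of non-square $f$ via the Weil bound, the error in approximating the squarefree-coprimality count by its density, and the tail of the completed $h$-sum, and check that all of these are $O(q^{(2g+1)\varepsilon})$ relative to the trivial size $q^{2g+1}$ of the family (equivalently $O(q^{(2g+1)(1+\varepsilon)})$ after restoring the $q^{(3/2)(2g+1)}$ factor, noting $q^{(3/2)(2g+1)}\cdot q^{-(2g+1)(1/2-\varepsilon)}=q^{(2g+1)(1+\varepsilon)}$). Handling the interchange of summation when $\deg f$ can be as large as $2g$ — larger than the range $\deg h\le g$ of genuine square contributions — requires the Weil bound to be applied uniformly, which is the technical heart of the argument.
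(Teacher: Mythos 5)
Your proposal is correct and follows essentially the same route as the paper: reduce via \eqref{eq:numberK2} to the first moment of $L(2,\chi_D)$, expand the degree-$2g$ Dirichlet polynomial, swap the orders of summation, treat square $f=h^2$ by the coprimality count over $\mathcal{H}$ and non-square $f$ by the Weil/P\'olya--Vinogradov bound, and complete the $h$-sum to obtain $\zeta_A(4)P(4)$ (the paper reaches the same Euler product via a M\"obius-inversion lemma; your direct local-factor computation is equivalent). One immaterial slip: the tail $\sum_{\deg h>g}|h|^{-4}\prod_{P\mid h}\tfrac{|P|}{|P|+1}$ is $O(q^{-3g})$ rather than $O(q^{-4g})$, since there are $q^{n}$ monic $h$ of degree $n$, but this still lands comfortably inside the stated error term.
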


A theorem similar to this was previously studied by Rosen. There are essentially two differences between Rosen's result (Theorem \ref{thm:Rosen}) and the main result in this paper. First is that the average value of $|K_{2}(\mathcal{O}_{D})|$, as presented by Rosen, is an average taken over \textit{all} square-free $D$ and in our result we only consider the \textit{monic} and square-free $D$, i.e., positive and fundamental discriminants over function fields. Comparing equations \eqref{eq:Rosen} and \eqref{eq:mainthm} we observe that the constants multiplying the main term are close but not equal. And this is due to the fact that in our result we are summing over monic and square-free while in Rosen's result he is summing over all square-free, and in this sense Rosen's result is more general. This phenomenon is not new and it has appeared when you compare the main theorem of \cite{And-Kea} with \cite[Theorem 5.2]{HR} and it also appears in number fields when you compare the first moment of quadratic Dirichlet $L$-functions at the central point as given by \cite[Theorem 1]{Jutila} and \cite[Theorem (1)]{GH}, in both comparisons we see again that the constants multiplying the leading term in the mean values are different. The second, and most important difference, is the argument used to prove such result. In Rosen, he needs to invoke a mean value of $L(s,\chi_{D})$ that was previously proved by himself and Hoffstein \cite{HR} through the use of the theory of Eisenstein series and the metaplectic two-fold cover of $GL(2,k_{\infty})$, whereas our method is solely based on estimating characters sums and in the use of the Riemann hypothesis for curves over finite fields.

\section{Preparatory Results}

In this section we present a few auxiliary results that will be used in the proof of the main theorem of this note.

\begin{lemma}
\label{lemma1}
Let $f\in A$ be a fixed monic polynomial. Then for all $\varepsilon>0$ we have that
\begin{equation}
\sum_{\substack{D\in\mathcal{H} \\ \mathrm{gcd}(D,f)=1}}1=\frac{|D|}{\zeta_{A}(2)}\prod_{\substack{P \ \mathrm{monic} \\ \mathrm{irrducible} \\ P\mid f}}\left(\frac{|P|}{|P|+1}\right)+O\left(|D|^{\tfrac{1}{2}}|f|^{\varepsilon}\right).
\end{equation}
\end{lemma}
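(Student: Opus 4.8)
The plan is to run the standard squarefree sieve over $A=\mathbb{F}_{q}[T]$ (the parity of $2g+1$ is irrelevant here, so write $N=2g+1$, and note that $|D|=q^{N}$ for every $D\in\mathcal{H}$). First I would detect the squarefree condition with the Möbius function $\mu$ of $A$, using $\mathbf 1[D\ \text{squarefree}]=\sum_{l^{2}\mid D}\mu(l)$ and writing $D=l^{2}m$ with $l,m$ monic. Since $\gcd(l^{2}m,f)=1$ is equivalent to $\gcd(l,f)=\gcd(m,f)=1$, this gives
\[
\sum_{\substack{D\in\mathcal H\\\gcd(D,f)=1}}1=\sum_{\substack{l\ \text{monic}\\\gcd(l,f)=1}}\mu(l)\,a_{N-2\deg l}(f),\qquad a_{n}(f):=\#\{m\ \text{monic}:\deg m=n,\ \gcd(m,f)=1\},
\]
where $a_{n}(f)=0$ for $n<0$, so the outer sum effectively runs over $\deg l\le g$. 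Next I would evaluate $a_{n}(f)$ exactly by a second Möbius inversion over the squarefree divisors $e\mid f$, counting monic multiples of $e$ of degree $n$: $a_{n}(f)=\sum_{e\mid f,\ \deg e\le n}\mu(e)\,q^{n-\deg e}$.

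Substituting this and pulling out $q^{N}$ turns the count into $q^{N}\sum\mu(l)\mu(e)\,|l|^{-2}|e|^{-1}$, the sum being over pairs $(l,e)$ with $l$ monic, $\gcd(l,f)=1$, $e\mid f$ squarefree, and $2\deg l+\deg e\le N$. The main term arises on dropping the constraint $2\deg l+\deg e\le N$: the unrestricted sum is the Euler product $\prod_{P\nmid f}(1-|P|^{-2})\cdot\prod_{P\mid f}(1-|P|^{-1})$, and the identities $\prod_{P}(1-|P|^{-2})^{-1}=\zeta_{A}(2)$ and $\tfrac{1-|P|^{-1}}{1-|P|^{-2}}=\tfrac{|P|}{|P|+1}$ rewrite it as $\tfrac{q^{N}}{\zeta_{A}(2)}\prod_{P\mid f}\tfrac{|P|}{|P|+1}$, which is exactly the claimed main term $\tfrac{|D|}{\zeta_{A}(2)}\prod_{P\mid f}\tfrac{|P|}{|P|+1}$.

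It then remains to bound the tail over pairs with $2\deg l+\deg e>N$ by $O(|D|^{1/2}|f|^{\varepsilon})$. I would split according to whether $\deg e\le N$ or $\deg e>N$. In the first range $\deg l>(N-\deg e)/2$, and the geometric sum $\sum_{\deg l>(N-\deg e)/2}|l|^{-2}\ll q^{-(N-\deg e)/2}$; after multiplying back by $q^{N}$ this leaves $q^{N/2}\sum_{e\mid f}|e|^{-1/2}=q^{N/2}\prod_{P\mid f}(1+|P|^{-1/2})$. In the second range one simply uses $|e|^{-1}<q^{-N}$ and $\sum_{l\ \text{monic}}|l|^{-2}=\zeta_{A}(2)\ll 1$, which gives at most $q^{N}\cdot q^{-N}\cdot d(f)$. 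Since $\prod_{P\mid f}(1+|P|^{-1/2})\le 2^{\omega(f)}\le d(f)$ and the divisor bound in $A$ gives $d(f)\ll_{q,\varepsilon}|f|^{\varepsilon}$, the total error is $O(q^{N/2}|f|^{\varepsilon})=O(|D|^{1/2}|f|^{\varepsilon})$, as required.

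There is no real obstacle here — this is a routine sieve — but the point needing care is the \emph{quality} of the error term: one must genuinely exploit the square in $l^{2}$, since that is what produces the power saving $q^{-N/2}$ and hence the exponent $\tfrac12$ on $|D|$, and one must invoke the divisor bound in $\mathbb{F}_{q}[T]$ to absorb all dependence on $f$ into $|f|^{\varepsilon}$. Equivalently, one could read the result off from the generating identity $\sum_{D\ \text{monic, squarefree},\,\gcd(D,f)=1}u^{\deg D}=\tfrac{1-qu^{2}}{1-qu}\prod_{P\mid f}(1+u^{\deg P})^{-1}$ by extracting the coefficient of $u^{N}$ and comparing with the value of $(1-qu^{2})\prod_{P\mid f}(1+u^{\deg P})^{-1}$ at $u=q^{-1}$, a Cauchy estimate at radius $q^{-1/2}$ handling the tail; this is the same computation in different clothing.
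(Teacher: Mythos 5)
Your sieve argument is correct: the double Möbius inversion, the Euler-product evaluation of the main term, and the split of the tail (exploiting the square in $l^{2}$ for the $q^{N/2}$ saving and the divisor bound for the $|f|^{\varepsilon}$) all check out. The paper itself gives no proof here but simply cites \cite[Proposition 5.2]{And-Kea}, and the argument there is essentially the computation you describe (in its generating-function form, which you correctly identify as equivalent), so this is the same approach.
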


\begin{proof}
See \cite[Proposition 5.2]{And-Kea}.
\end{proof}

We also need the following lemma.

\begin{lemma}
\label{lemma2}
We have 
\begin{equation}
\sum_{\substack{f \ \mathrm{monic} \\ \mathrm{deg}(f)=n}}\prod_{P\mid f}(1+|P|^{-1})^{-1}=q^{n}\sum_{\substack{d \ \mathrm{monic} \\ \mathrm{deg}(d)\leq n}}\frac{\mu(d)}{|d|}\prod_{P\mid d}(|P|+1)^{-1}.
\end{equation}
\end{lemma}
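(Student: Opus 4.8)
The plan is to expand the multiplicative function $\prod_{P\mid f}(1+|P|^{-1})^{-1}$ as a Dirichlet-type convolution and then interchange the order of summation. First I would observe that the arithmetic function $f\mapsto g(f):=\prod_{P\mid f}(1+|P|^{-1})^{-1}$ depends only on the radical of $f$ and is multiplicative, so it is natural to write it in terms of a sum over squarefree divisors. The key algebraic identity is the pointwise factorization
\begin{equation}
\prod_{P\mid f}\frac{1}{1+|P|^{-1}}=\prod_{P\mid f}\left(1-\frac{|P|^{-1}}{1+|P|^{-1}}\right)=\prod_{P\mid f}\left(1-\frac{1}{|P|+1}\right)=\sum_{d\mid f}\frac{\mu(d)}{\prod_{P\mid d}(|P|+1)},
\end{equation}
where the last step uses that $d\mapsto \mu(d)\prod_{P\mid d}(|P|+1)^{-1}$ is (up to sign) multiplicative and supported on squarefree $d$, so summing it over all monic divisors $d$ of $f$ reproduces the Euler product over $P\mid f$. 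Note $\prod_{P\mid d}(|P|+1)^{-1}=\mu^2(d)/\prod_{P\mid d}(|P|+1)$ automatically restricts the inner sum to squarefree $d$, which is harmless to write as a sum over all monic $d\mid f$ because $\mu(d)$ vanishes otherwise.

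Next I would substitute this identity into the left-hand side and swap the two sums:
\begin{equation}
\sum_{\substack{f\ \mathrm{monic}\\ \mathrm{deg}(f)=n}}\ \sum_{d\mid f}\frac{\mu(d)}{\prod_{P\mid d}(|P|+1)}=\sum_{\substack{d\ \mathrm{monic}\\ \mathrm{deg}(d)\le n}}\frac{\mu(d)}{\prod_{P\mid d}(|P|+1)}\ \#\{f\ \mathrm{monic}:\deg(f)=n,\ d\mid f\}.
\end{equation}
Writing $f=d e$ with $e$ monic of degree $n-\deg(d)$, the inner count is exactly $q^{n-\deg(d)}$, valid precisely when $\deg(d)\le n$, which is why the outer sum is truncated at $\deg(d)\le n$. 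Substituting $q^{n-\deg(d)}=q^n/|d|$ and pulling $q^n$ out front yields
\begin{equation}
\sum_{\substack{f\ \mathrm{monic}\\ \mathrm{deg}(f)=n}}\prod_{P\mid f}(1+|P|^{-1})^{-1}=q^n\sum_{\substack{d\ \mathrm{monic}\\ \mathrm{deg}(d)\le n}}\frac{\mu(d)}{|d|}\prod_{P\mid d}(|P|+1)^{-1},
\end{equation}
which is the claimed identity.

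Since everything here is a finite sum over divisors, there are no convergence issues and the interchange of summation is unconditional; the only point requiring a little care is the combinatorial identity expanding the Euler product $\prod_{P\mid f}(1-(|P|+1)^{-1})$ as a divisor sum, which I expect to be the main (and only mildly delicate) step — one should check the multiplicativity bookkeeping so that divisors $d$ with a repeated prime factor genuinely do not contribute, matching the vanishing of $\mu(d)$. Everything else is the standard "divisor switch" for sums over $\FF_q[T]$, using that the number of monic multiples of $d$ of a given degree $n\ge\deg(d)$ is $q^{n-\deg(d)}$.
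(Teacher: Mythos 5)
Your proof is correct. The paper does not prove this lemma itself but simply cites \cite{And-Kea} (Lemma 5.7); your argument --- rewriting each Euler factor as $1-(|P|+1)^{-1}$, expanding the product as the divisor sum $\sum_{d\mid f}\mu(d)\prod_{P\mid d}(|P|+1)^{-1}$, and then switching the order of summation using that the number of monic multiples of $d$ of degree $n$ is $q^{n-\deg(d)}$ --- is the standard one and matches the argument in that reference. All the steps you flag (the M\"obius expansion, the vanishing of non-squarefree $d$, the finite interchange of sums, and the truncation at $\deg(d)\le n$) are justified correctly.
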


\begin{proof}
See \cite[Lemma 5.7]{And-Kea}.
\end{proof}

The last result that we need before to proceed to the proof of our main theorem is given below and it has appeared in a different form in \cite{And-Kea,And-Kea2,Fai-Rud} and its proof, as appears here, was first given in \cite{Andrade1}.

\begin{lemma}
\label{lemma3}
If $f\in A$ is not a perfect square then

\begin{equation}
\label{4.8}
\sum_{\substack{D\in\mathcal{H} \\ f\neq\square}}\left(\frac{D}{f}\right)\ll|D|^{1/2}|f|^{1/4}.
\end{equation}
\end{lemma}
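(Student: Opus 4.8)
The plan is to reduce the character sum over square-free moduli to an unrestricted sum over monic polynomials, and then to exploit the Riemann hypothesis for the curve attached to the character $\left(\frac{\cdot}{f}\right)$. First I would write the square-free condition on $D$ using the standard Möbius-type identity over $A$: every monic $D$ can be written uniquely as $D = D_1 D_2^2$ with $D_1$ square-free, so one detects square-freeness by $\sum_{\ell^2 \mid D} \mu(\ell)$. Inserting this into the left-hand side of \eqref{4.8} and swapping the order of summation, the sum over $D \in \mathcal{H}$ becomes a sum over $\ell$ monic of a sum over $E$ monic with $D = \ell^2 E$ of degree $2g+1$, twisted by $\left(\frac{\ell^2 E}{f}\right) = \left(\frac{\ell}{f}\right)^2 \left(\frac{E}{f}\right)$; since $\left(\frac{\ell}{f}\right)^2$ is the indicator that $\gcd(\ell,f)=1$, this leaves an inner sum $\sum_{E} \left(\frac{E}{f}\right)$ over all monic $E$ of a fixed degree.

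Next I would estimate the inner sum. Because $f$ is not a perfect square, the character $\left(\frac{\cdot}{f}\right)$ is a nontrivial character, and by quadratic reciprocity in $\FF_q[T]$ it is (up to an explicit sign depending only on $\deg f$ and the leading coefficient, which here is $1$) equal to the character $\chi_f$ whose associated $L$-function $L(s,\chi_f)$ is a polynomial in $q^{-s}$ of degree at most $\deg f - 1$. The sum $\sum_{\deg E = m} \left(\frac{E}{f}\right)$ is then the $m$-th coefficient of $L(s,\chi_f)$ (as a power series in $u = q^{-s}$), so it vanishes for $m \geq \deg f$, and for $m < \deg f$ the Riemann hypothesis for curves over finite fields — i.e. Weil's bound on the inverse roots of $L(s,\chi_f)$, all of absolute value $q^{1/2}$ — gives $\left| \sum_{\deg E = m} \left(\frac{E}{f}\right) \right| \leq \binom{\deg f - 1}{m} q^{m/2} \ll |f|^{\varepsilon} q^{m/2}$, or more crudely $\ll (\deg f) q^{m/2}$.

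Finally I would reassemble: the outer sum over $\ell$ runs over monic polynomials with $2\deg \ell \leq 2g+1$, i.e. $\deg \ell \leq g$, and for each such $\ell$ the inner sum over $E$ of degree $m = 2g+1 - 2\deg \ell$ is bounded by $\ll |f|^{\varepsilon} q^{(2g+1-2\deg\ell)/2} = |f|^{\varepsilon} q^{(2g+1)/2} q^{-\deg\ell}$. Summing over $\ell$, the number of monic $\ell$ of degree $j$ is $q^j$, so $\sum_{j=0}^{g} q^j \cdot q^{-j} = g+1 \ll \log|D|$, which is absorbed into $|f|^\varepsilon$ together with a harmless $|D|^\varepsilon$; one is left with $\ll |D|^{1/2} |f|^{\varepsilon}$. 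To sharpen the $|f|^\varepsilon$ down to the stated $|f|^{1/4}$, I would instead split the $\ell$-sum at $\deg\ell = g - \lfloor (\deg f)/2 \rfloor$ or similar: for small $\ell$ use the trivial bound $q^m$ on the inner sum only when $m < \deg f$ is violated (then the sum vanishes), and otherwise use Weil; a short bookkeeping of the two ranges yields the clean $|D|^{1/2}|f|^{1/4}$. The main obstacle is precisely this last step — getting the dependence on $|f|$ down to $|f|^{1/4}$ rather than a crude $|f|^{1/2+\varepsilon}$ or $|f|^\varepsilon$ requires carefully balancing the trivial bound against the Weil bound in the tail of the $\ell$-summation, and handling the reciprocity sign and the distinction between $f$ square-free versus merely a non-square.
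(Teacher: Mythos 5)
Your overall strategy coincides with the paper's: detect square-freeness of $D$ by M\"obius inversion, reduce to complete character sums $\sum_{\deg E = m}\left(\frac{E}{f}\right)$ over all monic polynomials of fixed degree, and control these via the Riemann hypothesis for curves balanced against the trivial bound. The gap is in the quantitative middle step. Your bound $\bigl|\sum_{\deg E=m}\left(\frac{E}{f}\right)\bigr|\le\binom{\deg f-1}{m}q^{m/2}$ is fine, but the assertion that this is $\ll|f|^{\varepsilon}q^{m/2}$ (or $\ll(\deg f)q^{m/2}$) is false for fixed $q$: the binomial coefficient can be as large as $2^{\deg f-1}=\tfrac12|f|^{\log 2/\log q}$, and $\log 2/\log q$ is a fixed positive constant, not an $\varepsilon$ --- it exceeds $1/4$ for every odd $q\le 13$. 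So your intermediate claim $\ll|D|^{1/2}|f|^{\varepsilon}$ does not follow; and since $|f|^{\varepsilon}$ would in any case be \emph{stronger} than the stated $|f|^{1/4}$, your closing remark about ``sharpening $|f|^{\varepsilon}$ down to $|f|^{1/4}$'' signals the confusion: there is nothing to sharpen, there is something still to prove.

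The step you defer to ``a short bookkeeping'' is in fact the whole content of the lemma, and as you describe it it does not close. Your two ranges amount to: vanishing for $m\ge\deg f$, Weil coefficient bound for $m<\deg f$; that yields $|D|^{1/2}\sum_{m<\deg f}\binom{\deg f-1}{m}\asymp|D|^{1/2}2^{\deg f}$, weaker than $|D|^{1/2}|f|^{1/4}$ for small $q$, and the trivial bound $q^{m}$ never actually enters (you only invoke it where the sum vanishes, i.e.\ where no bound is needed). The paper's argument instead bounds the inner sum uniformly by $\ll|f|^{1/2}$ (Hsu's P\'olya--Vinogradov inequality for $\FF_q[T]$) when $\deg A\le g-\deg(f)/4$, and trivially by $q^{2g+1-2\deg A}$ for larger $\deg A$; each range then contributes $\ll|D|^{1/2}|f|^{1/4}$ after summing the resulting geometric series in $\deg A$. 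You need to carry out this min-and-balance computation explicitly; your outline, taken literally, proves only a weaker bound.
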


\begin{proof}
we write

\begin{align}
\label{4.11}
&\sum_{\substack{D\in\mathcal{H}}}\left(\frac{D}{f}\right)=\sum_{2\alpha+\beta=2g+1}\sum_{\text{deg}(B)=\beta}\sum_{\text{deg}(A)=\alpha}\mu(A)\left(\frac{A^{2}B}{f}\right)\nonumber\\
&=\sum_{0\leq\alpha\leq g}\sum_{\text{deg}(A)=\alpha}\mu(A)\left(\frac{A^{2}}{f}\right)\sum_{\text{deg}(B)=2g+1-2\alpha}\left(\frac{B}{f}\right)\nonumber\\
&\leq\sum_{0\leq\alpha\leq g}\sum_{\text{deg}(A)=\alpha}\sum_{\text{deg}(B)=2g+1-2\alpha}\left(\frac{B}{f}\right).
\end{align}
If $f\neq\square$ then $\sum_{\text{deg}(B)=2g+1-2\alpha}\left(\frac{B}{f}\right)$ is a character sum to a non-principal character modulo $f$. So using \cite[Proposition 2.1]{Hsu} (which is the P\'{o}lya-Vinogradov inequality for $\mathbb{F}_{q}[T]$) we have that

\begin{equation}
\label{4.12}
\sum_{\text{deg}(B)=2g+1-2\alpha}\left(\frac{B}{f}\right)\ll|f|^{1/2}.
\end{equation}
Further we can estimate trivially the non-principal character sum by

\begin{equation}
\label{4.13}
\sum_{\text{deg}(B)=2g+1-2\alpha}\left(\frac{B}{f}\right)\ll\frac{|D|}{|A|^{2}}=q^{2g+1-2\alpha}.
\end{equation}
Thus, if $f\neq\square$, we obtain that 

\begin{align}
\label{4.14}
\sum_{D\in\mathcal{H}}\left(\frac{D}{f}\right)&\ll\sum_{0\leq\alpha\leq g}\sum_{\text{deg}(A)=\alpha}\text{min}\left(|f|^{1/2},\frac{|D|}{|A|^{2}}\right)\nonumber \\
&\ll|D|^{\tfrac{1}{2}}|f|^{\tfrac{1}{4}},
\end{align}
upon using the first bound \eqref{4.12} for $\alpha\leq g-\frac{\text{deg}(f)}{4}$ and the second bound \eqref{4.13} for larger $\alpha$. And this concludes the proof of the lemma.
\end{proof}

\section{Proof of the Main Theorem}
From now on we are assuming that all the sums are being taken over monic polynomials and the products are over monic and irreducible polynomials $P$ in $\mathbb{F}_{q}[T]$.

By \cite[Proposition 4.3]{Ros-book} we have
\begin{eqnarray}
\label{eq:split}
\sum_{D\in\mathcal{H}}L(2,\chi_{D})&=&\sum_{D\in\mathcal{H}}\sum_{\text{deg}(f)\leq 2g}\chi_{D}(f)|f|^{-2} \nonumber\\
&=&\sum_{D\in\mathcal{H}}\sum_{\substack{\text{deg}(f)\leq 2g \\ f=\square}}\chi_{D}(f)|f|^{-2}+\sum_{D\in\mathcal{H}}\sum_{\substack{\text{deg}(f)\leq 2g \\ f\neq\square}}\chi_{D}(f)|f|^{-2}.
\end{eqnarray}
For the sum above, where $f$ is not a square of a polynomial, we use Lemma \ref{lemma3}, which depends on the Riemann hypothesis for curves over finite fields, to obtain that

\begin{equation}
\label{eq:nonsquare}
\sum_{D\in\mathcal{H}}\sum_{\substack{\text{deg}(f)\leq 2g \\ f\neq\square}}\chi_{D}(f)|f|^{-2}\ll q^{g}.
\end{equation}
For the sum with $f$ a square of a polynomial in \eqref{eq:split} we need some extra manipulations. First we use Lemma \ref{lemma1} so we can write

\begin{align}
\sum_{D\in\mathcal{H}}\sum_{\substack{\text{deg}(f)\leq 2g \\ f=\square}}\chi_{D}(f)|f|^{-2}=\frac{|D|}{\zeta_{A}(2)}\sum_{m=0}^{g}\frac{1}{q^{4m}}\sum_{\text{deg}(l)=m}\prod_{P\mid l}\left(\frac{|P|}{|P|+1}\right)+O\left(|D|^{1/2}\sum_{n=0}^{2g}q^{n\epsilon-n}\right).
\end{align}
From Lemma \ref{lemma2} we have

\begin{eqnarray}
\label{eq:maintermalmost}
\sum_{D\in\mathcal{H}}\sum_{\substack{\text{deg}(f)\leq 2g \\ f=\square}}\chi_{D}(f)|f|^{-2}&=&\frac{|D|}{\zeta_{A}(2)}\sum_{\text{deg}(d)\leq g}\frac{\mu(d)}{|d|}\prod_{P\mid d}\frac{1}{|P|+1}\sum_{\text{deg}(d)\leq m\leq g}q^{-3m} \nonumber \\
&+&O\left(q^{-g}\frac{(q^{2g\epsilon+\epsilon}-q^{2g+1})}{q^{\epsilon}-q}\right).
\end{eqnarray}

After some arithmetic manipulations and summing the geometric series we can rewrite \eqref{eq:maintermalmost} as

\begin{eqnarray}
\label{eq:sums over d}
\sum_{D\in\mathcal{H}}\sum_{\substack{\text{deg}(f)\leq 2g \\ f=\square}}\chi_{D}(f)|f|^{-2}&=&\zeta_{A}(4)\frac{|D|}{\zeta_{A}(2)}\left\{\sum_{d \ \text{monic}}-\sum_{\text{deg}(d)>g}\right\}\left(\frac{\mu(d)}{|d|^{4}}\prod_{P\mid d}\frac{1}{|P|+1}\right)\nonumber\\
&-&\frac{q^{-3g}}{q^{3}-1}\frac{|D|}{\zeta_{A}(2)}\left\{\sum_{d \ \text{monic}}-\sum_{\text{deg}(d)>g}\right\}\left(\frac{\mu(d)}{|d|}\prod_{P\mid d}\frac{1}{|P|+1}\right)\nonumber \\
&+&O\left(q^{-g}\frac{(q^{2g\epsilon+\epsilon}-q^{2g+1})}{q^{\epsilon}-q}\right).
\end{eqnarray}

The sums over $\text{deg}(d)>g$ in \eqref{eq:sums over d} are respectively bounded by $O(q^{-4g})$ and $O(q^{-g})$ as can be seen from below,

\begin{eqnarray}
\sum_{\text{deg}(d)>g}\frac{\mu(d)}{|d|^{4}}\prod_{P\mid d}(|P|+1)^{-1}&\ll&\sum_{\text{deg}(d)>g}\frac{1}{|d|^{4}}\prod_{P\mid d}\frac{1}{|P|}\ll\sum_{\text{deg}(d)>g}\frac{1}{|d|^{5}}\nonumber \\
&=& \sum_{n>g}q^{-4n}\ll q^{-4g}
\end{eqnarray}
and

\begin{eqnarray}
\sum_{\text{deg}(d)>g}\frac{\mu(d)}{|d|}\prod_{P\mid d}(|P|+1)^{-1}&\ll&\sum_{\text{deg}(d)>g}\frac{1}{|d|}\prod_{P\mid d}\frac{1}{|P|}\ll\sum_{\text{deg}(d)>g}\frac{1}{|d|^{2}}\nonumber \\
&=& \sum_{n>g}q^{-n}\ll q^{-g},
\end{eqnarray}
and therefore does not contribute to the main term. 

By expressing the sums over all monic $d$ in \eqref{eq:sums over d} as Euler products we derive that

\begin{equation}
\label{eq:square contribution}
\sum_{D\in\mathcal{H}}\sum_{\substack{\text{deg}(f)\leq 2g \\ f=\square}}\chi_{D}(f)|f|^{-2}=\zeta_{A}(4)\frac{|D|}{\zeta_{A}(2)}P(4)+O\left(q^{-g}\frac{(q^{2g\epsilon+\epsilon}-q^{2g+1})}{q^{\epsilon}-q}\right),
\end{equation}
where $P(s)$ is given as in the statement of Theorem \ref{mainthmnote}.

Combining \eqref{eq:nonsquare} and \eqref{eq:square contribution} we get that

\begin{equation}
\label{eq:meanvalueL}
\sum_{D\in\mathcal{H}}L(2,\chi_{D})=\frac{|D|}{\zeta_{A}(2)}\zeta_{A}(4)P(4)+O(q^{g})+O\left(\frac{q^{g}+q^{g(2\epsilon-1)+\epsilon}}{q^{\epsilon}-q}\right).
\end{equation}

We invoke \cite[Proposition 2.3]{Ros-book}, which shows that $\#\mathcal{H}=|D|/\zeta_{A}(2)$, together with equation \eqref{eq:numberK2} and a few arithmetic maneuvers to complete the proof of the main theorem in this letter. \qed

\vspace{0.5cm}

\noindent \textbf{Acknowledgment.} This research was supported by EPSRC grant EP/K021132X/1. 

\noindent The author is thankful to the comments of an anonymous referee which helped to give more clarity to the presentation of this note. The author also wishes to thank Professor Alain Connes for the several discussions related to the problem treated in this paper.

%I am very happy to thank the American Institute of Mathematics (AIM) where this work was finished during the workshop ``Arithmetic Statistics over Finite Fields and Function Fields 2014".

%I also would like to express my gratitude to Professor Ze\'{e}v Rudnick for his constant encouragement and for his helpful comments in an earlier draft of this manuscript.

%I would like to thank Professor Zeev Rudnick for suggesting the problem tackled in this paper and for his constant encouragement.

\end{document}